\newtheorem{theorem}{Theorem}[section]
\newtheorem*{theorem*}{Theorem}
\newtheorem{lemma}[theorem]{Lemma}
\newtheorem{corollary}[theorem]{Corollary}
\newtheorem{proposition}[theorem]{Proposition}
\newtheorem{definition}[theorem]{Definition}
\newtheorem{claim}{Claim}[theorem]
\newtheorem{remark}[theorem]{Remark}
\newtheorem{problem}[theorem]{Problem}
\newenvironment{customthm}[1]
  {\innercustomthm}
  {\endinnercustomthm}
\newcommand{\C}{\mathbb{C}}
\begin{document}

\title[Proof of the metric Arnold's corank problem]
{Proof of the metric Arnold's corank problem}

\author[A. Fernandes]{Alexandre Fernandes}
\author[Z. Jelonek]{Zbigniew Jelonek}
\author[J. E. Sampaio]{Jos\'e Edson Sampaio}

\address[Zbigniew Jelonek]{ Instytut Matematyczny, Polska Akademia Nauk, \'Sniadeckich 8, 00-656 Warszawa, Poland  
             \& ICMC-USP, Caixa Postal 668, 13560-970, Sao-Carlos, Brazil \newline
              E-mail: {\tt najelone@cyf-kr.edu.pl}
}

{\address[Jos\'e Edson Sampaio \& Alexandre Fernandes]{    
              Departamento de Matem\'atica, Universidade Federal do Cear\'a,
	      Rua Campus do Pici, s/n, Bloco 914, Pici, 60440-900, 
	     Fortaleza-CE, Brazil. \newline  
               E-mail: {\tt edsonsampaio@mat.ufc.br, alex@mat.ufc.br}
}

\keywords{Arnold's corank Problem, Zariski's Multiplicity Conjecture,  Bi-Lipschitz Invariants, Topological Invariants}
\subjclass[2010]{14B05, 32S50, 58K30 (Primary) 58K20 (Secondary)}
\thanks{The first named author was partially supported by CNPq-Brazil grant 304700/2021-5. 
The second named author is partially supported by the grant of Narodowe Centrum Nauki, number 2024/55/B/ST1/01412.
The third named author was partially supported by CNPq-Brazil grant 303375/2025-6 and by the Serrapilheira Institute (grant number Serra -- R-2110-39576).
}

\begin{abstract}
In this article, we approach the Arnold corank problem, posed by Arnold in 1975, which asks whether the corank of holomorphic functions is an ambient topological invariant. Here, we obtain a complete positive answer to the metric Arnold corank problem, which asks whether the corank of holomorphic functions is an ambient bi-Lipschitz invariant. Consequently, we show that for complex hypersurfaces, the multiplicity equal to two is an ambient bi-Lipschitz invariant. We also prove that the Arnold corank problem holds true for holomorphic functions of three variables. Other topological invariants are also presented. 
\end{abstract}

\maketitle
\tableofcontents

\section{Introduction}

Let $f\colon (\C^n,0)\to (\C,0)$ be the germ of a holomorphic function at the origin. Let $(V(f),0)$ be the germ of the zero set of $f$ at the origin. We recall that the {\bf corank} of the germ of a holomorphic function $f\colon (\C^n,0)\to (\C,0)$ is the corank of the Hessian matrix $f$ at $0$. We denote the corank of $f$ by ${\rm crk}(f)$. When $f$ is a reduced function, the {\bf corank} of $(V(f),0)$ is ${\rm crk}(V(f),0):={\rm crk}(f)$, and the {\bf rank} of $(V(f),0)$ is ${\rm rk}(V(f),0):=n-{\rm crk}(f)$.

In 1975, Arnold asked whether the corank of a holomorphic function is an ambient topological invariant. More precisely, we have the following problem:
 
\begin{problem}[Arnold corank problem] 
Let $f,g\colon (\C^n,0)\to (\C,0)$ be germs of reduced holomorphic functions. If there is a homeomorphism $\varphi\colon(\C^n,0)\to (\C^n,0)$ such that $\varphi(V(f))=V(g)$, is it true that ${\rm crk}(f)={\rm crk}(g)$?
\end{problem}
This problem is also stated in the book Arnold's Problems (see \cite[Problem 1975-14, p. 22]{Arnold:2004}). In this book, there is also a comment from Vassiliev that nothing is known to him about this problem (see \cite[p. 284]{Arnold:2004}).

So, it is natural to ask the following question.

\begin{problem}[Metric Arnold corank problem]\label{prob:Lip_rank}
Let $f,g\colon (\C^n,0)\to (\C,0)$ be germs of reduced holomorphic functions. If there is a bi-Lipschitz homeomorphism $\varphi\colon(\C^n,0)\to (\C^n,0)$ such that $\varphi(V(f))=V(g)$, is it true that ${\rm crk}(f)={\rm crk}(g)$?
\end{problem}

The investigation of metric versions of the Zariski multiplicity problem (which we will discuss later) and the Arnold corank problem is a fully plausible and important line of inquiry. When these classical problems were first formulated, it had not yet been established that the topological study and the Lipschitz study of complex hypersurfaces were, in fact, distinct. Now that this distinction is known, it is entirely natural to study their metric versions.

The corank of a holomorphic function at a singular point measures how far a singularity is from being a Morse singularity (which corresponds to corank zero). This measure is operationally defined by the powerful Splitting Lemma (also known as the Generalized Morse Lemma), which shows that any holomorphic function near a critical point can be decomposed into a quadratic part plus a residual function depending only on the corank variables, i.e., given a holomorphic function $f\colon (\C^n,0)\to(\C,0)$ with $r=n-{\rm crk}(f)$, there is a holomorphic diffeomorphism $\psi\colon (\C^n,0)\to (\C^n,0)$ such that
$$
f\circ \psi(z)=z_1^2+\cdots+z_r^2+h(z_{r+1},...,z_n),
$$
where $h\colon (\C^{n-r},0)\to (\C,0)$ is a holomorphic function such that ${\rm ord}_0(h)>2$.
In this way, the corank determines the dimension of the “essential” singularity space that carries the true complexity of the singularity. 
Therefore, the Arnold corank problem (resp. the metric Arnold corank problem) asks whether the decomposition given by the Splitting Lemma, which is, a priori, an analytic invariant, depends only on the embedded topological (resp. bi-Lipschitz) type of the zero sets of the involved functions.

In this paper, we provide a complete positive answer to the metric Arnold corank problem.

\begin{customthm}{\ref*{thm:Lip_rank}}
Let $f,g\colon (\C^n,0)\to (\C,0)$ be two reduced holomorphic functions which are singular at the origin. Assume that there is a bi-Lipschitz homeomorphism $\phi\colon (\C^n,0)\to (\C^n,0)$ that sends the zeros of $f$ onto the zeros of $g$. Then ${\rm crk}(f)={\rm crk}(g)$.
\end{customthm}

We also provide a positive answer to the Arnold corank problem when $n=3$.

\begin{customthm}{\ref*{thm:top_rank}}
Let $f,g\colon (\C^3,0)\to (\C,0)$ be two reduced holomorphic functions. Assume that there is a homeomorphism $\phi\colon (\C^3,0)\to (\C^3,0)$ sending the zeros of $f$ onto the zeros of $g$. Then ${\rm crk}(f)={\rm crk}(g)$.
\end{customthm}

Another problem addressed here is the following conjecture, which is a metric version of the famous Zariski multiplicity problem (see \cite{zar}):

\vspace{3mm}

\begin{enumerate}[leftmargin=0pt]
\item[]{\bf Conjecture A.} 
{\it Let $X,Y\subset \C^n$  be two complex analytic hypersurfaces. If their germs at zero are ambient bi-Lipschitz homeomorphic, then their multiplicities $m(X,0)$ and $m(Y,0)$ are equal.}
\end{enumerate}

\vspace{3mm}

Despite  many efforts (see e.g. \cite{BobadillaFS:2018}, \cite{comte}, \cite{comteetall},  \cite{eph}, \cite{gau}, \cite{risler}), this conjecture is still open. In \cite{FernandesJS:2023}, we have proved that this conjecture follows from the following metric version of Zariski problem B (see \cite{zar}):
\vspace{3mm}

\begin{enumerate}[leftmargin=0pt]
\item[]{\bf Conjecture B.} {\it Let $C(X),C(Y)\subset \C^n$ be two $(n-1)-$dimensional algebraic cones with bases $X,Y.$  If there is a bi-Lipschitz homeomorphism 
$$\varphi\colon(\mathbb{C}^n,C(X),0)\to (\mathbb{C}^n,C(Y),0),$$ 
then $X$ is homeomorphic to $Y.$}
\end{enumerate}

\vspace{3mm}
In this paper, we  answer the above question partially: we show that if two algebraic $(n-1)-$dimensional cones $C(X), C(Y) \subset\mathbb C^n$ with isolated singularities are homeomorphic, then they have the same degree (see Proposition \ref{prop:1}) and, in particular, $X$ is homeomorphic to $Y.$
We prove that if two  algebraic $(n-1)$-dimensional cones $C(X), C(Y)\subset\mathbb C^n$  are ambient homeomorphic, then their bases $X$ and $Y$ have the same Euler characteristic (see Proposition \ref{prop:euler}).

As a consequence of Theorem \ref{thm:Lip_rank}, we obtain that Conjecture A is true for multiplicity two, that is, if $(X,0),(Y,0)\subset (\C^n,0)$ are germs of analytic hypersurfaces that are ambient bi-Lipschitz equivalent and $m(X,0)=2$, then also $m(Y,0)=2$ (see Corollary \ref{cor:nowe}).
In particular, this implies that Conjecture B is true if the degree of $C(X)$ is $2.$

\section{Algebraic Cones}
We start with a definition:

\begin{definition}
Let $X\subset \mathbb {CP}^n$ be an algebraic variety. We view $\mathbb {CP}^n$  as a hyperplane at infinity of $\mathbb {CP}^{n+1}$.  Then the \emph{ algebraic cone} $\overline{C(X)}\subset \mathbb {CP}^{n+1}$ with base $X$ is the set
$$\overline{C(X)}=\bigcup_{x\in X} \overline{O,x},$$ 
where $O$ is the center of coordinates in $\mathbb C^{n+1}\subset \mathbb {CP}^{n+1}$, and $\overline{O,x}$ is the projective line  through $O$ and $x.$ The  \emph{affine cone} $C(X)$ is $\overline{C(X)}\setminus X.$
The link of $C(X)$ is the set $L=\{ x\in C(X): ||x||=1\}.$
\end{definition}

Given a holomorphic function $f\colon (\C^n,0)\to \C$, we denote $V(f)=f^{-1}(0)$ and $F_f$ as the Milnor fibre of $f$. 
We recall that \emph{the multiplicity of} $V(f)$ at the origin, denoted by $m(V(f),0)$, is defined as follows: we write
$$f=f_m+f_{m+1}+\cdots+f_k+\cdots,$$
where each $f_k$ is the zero polynomial or a homogeneous polynomial of degree $k$, and $f_m\neq 0$. Then, $m(V(f),0):= m$. We also denote $in(f):=f_m$.

The following result is well-known (see, e.g., \cite{max}).

\begin{theorem}\label{hyp}
Let $V\subset \C\mathbb P^{n+1}$ be a smooth algebraic hypersurface of degree $d$. Then the integral (co)homology of V is torsion free, and the corresponding Betti numbers are  as follows:

(1) $b_i(V)=0$ if $i\not=n  \  is \ odd \ or \ i\not\in [0,2n].$

(2) $b_i(V)=1$ if $i\not=n$\ is even \ and $i\in [0,2n]. $

(3) $b_n(V)=\frac{(d-1)^{n+2}+(-1)^{n+1}}{d} + \frac{3(-1)^{n}+1}{2}.$
\end{theorem}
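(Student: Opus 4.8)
The plan is to isolate the single nontrivial Betti number $b_n(V)$ (the one in the middle real dimension $n$) by general structural theorems, and then to pin it down numerically through an Euler characteristic computation. First I would invoke the Lefschetz hyperplane theorem: since $V$ is a smooth hypersurface in $\mathbb{CP}^{n+1}$ of complex dimension $n$, the inclusion $V\hookrightarrow\mathbb{CP}^{n+1}$ induces isomorphisms $H_i(V;\Z)\cong H_i(\mathbb{CP}^{n+1};\Z)$ for $i<n$ and a surjection for $i=n$. Because $H_i(\mathbb{CP}^{n+1};\Z)$ is $\Z$ in even degrees and $0$ in odd degrees, this immediately gives $b_i(V)=1$ for $i<n$ even and $b_i(V)=0$ for $i<n$ odd, and shows $H_i(V;\Z)$ is free for $i<n$. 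Since $V$ is a closed oriented $2n$-manifold, Poincar\'e duality $H_i(V)\cong H^{2n-i}(V)$ together with the universal coefficient theorem transports this information across the middle dimension, yielding the same pattern for $i>n$ and leaving only $H_n(V)$ undetermined. Degrees $i\notin[0,2n]$ are trivially zero by the dimension of $V$, and the degenerate case $n=0$ ($V$ is $d$ points) I would simply treat by hand.

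Next I would verify that $H_*(V;\Z)$ is torsion-free, so that the claimed Betti numbers really describe the integral groups. The only place torsion could occur is $H_n(V)$. By the universal coefficient theorem the torsion subgroup of $H_n(V;\Z)$ is isomorphic to the torsion of $H^{n+1}(V;\Z)$, and by Poincar\'e duality $H^{n+1}(V;\Z)\cong H_{n-1}(V;\Z)$, which we have already identified as free. Hence $H_n(V;\Z)$ is torsion-free as well, completing the structural part of the statement.

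It then remains to compute $b_n(V)$, for which I would use $\chi(V)=\sum_{i=0}^{2n}(-1)^i b_i(V)$. Summing the known Betti numbers gives $\chi(V)=n+(-1)^n b_n(V)$ when $n$ is even and $\chi(V)=(n+1)+(-1)^n b_n(V)$ when $n$ is odd, so it suffices to know $\chi(V)$ explicitly. To obtain it I would apply the adjunction formula: the normal bundle of $V$ is $\mathcal{O}_V(d)$, so with $h$ the hyperplane class one has $c(TV)=(1+h)^{n+2}/(1+dh)$, and since $\int_V h^n=d$,
$$\chi(V)=\int_V c_n(TV)=d\cdot\bigl[h^n\bigr]\frac{(1+h)^{n+2}}{1+dh}=\frac{(1-d)^{n+2}-1}{d}+(n+2),$$
the last equality being a routine coefficient extraction (verifiable by the substitution $u=1+dh$ or by a residue at $h=0$). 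Substituting this value of $\chi(V)$ into the two parity cases and simplifying recovers exactly the stated formula $b_n(V)=\frac{(d-1)^{n+2}+(-1)^{n+1}}{d}+\frac{3(-1)^n+1}{2}$, where one checks directly that $\frac{3(-1)^n+1}{2}$ equals $2$ for $n$ even and $-1$ for $n$ odd. The conceptually deepest inputs are the Lefschetz and Poincar\'e duality theorems, but the only genuine computation — and the step most prone to bookkeeping errors — is the Chern-class evaluation of $\chi(V)$, which I would cross-check against the low-dimensional cases $n=0$ ($d$ points, $\chi=d$) and $n=1$ (a smooth plane curve of genus $\binom{d-1}{2}$).
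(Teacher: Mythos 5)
Your proposal is correct, and it coincides with the standard proof of this classical result: the paper itself states the theorem without proof, citing the survey \cite{max}, where the argument is exactly the one you give — Lefschetz hyperplane theorem plus Poincar\'e duality and universal coefficients for the torsion-freeness and the off-middle Betti numbers, then $b_n(V)$ extracted from $\chi(V)=\int_V c_n(TV)$ computed via adjunction from $c(TV)=(1+h)^{n+2}/(1+dh)$. All of your computations check out, including the coefficient extraction $\chi(V)=\frac{(1-d)^{n+2}-1}{d}+(n+2)$ and the parity bookkeeping that recovers the stated formula for $b_n(V)$.
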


Now we are ready to prove:

\begin{proposition}\label{prop:1}
Let $P=C(X),R=C(Y) \subset \C^{n+1}$ be two algebraic $n-$dimensional cones with smooth bases $X,Y.$ If $P, R$ are homeomorphic, then  $\deg  \ P = \deg\ R.$ In particular, $X$ and $Y$ are also homeomorphic.
\end{proposition}

\begin{proof}
Let deg $P=p$ and deg $R=r.$ 

Since the theorem is true for $1-$dimensional cones, we can assume that $P$ and $R$ have connected bases. By \cite{prill}, we can assume that $0$ is not a topologically regular point of $P$ and $R$, because otherwise both cones are hyperplanes. Thus, $p,r\geq 2$. Hence $P^*=P\setminus \{0\}$ is homeomorphic to $R^*=R\setminus\{0\}.$ Let  $L_P, L_R$ be the links of $P$ and $R$, respectively. Hence $L_P$ is a deformation retract of $P^*$, and similarly $L_R$ is a deformation retract of $R^*$.
Since $P^*$ and $R^*$ are homeomorphic we know that $L_P$ is homotopy equivalent to $L_R.$ Let $X,Y$ be bases of $P,R.$

We have the Hopf fibrations $\pi : L_P \to X, \ \pi': L_R\to Y$ whose fibers are circles.
Analizing  the spectral sequences of the mappings $\pi$ and $\pi'$ (see  \cite{orlik}, \cite{kol}), we find that the corresponding Betti numbers are given as follows:

(1) $b_i(L_P, \mathbb Q) = b_i(X,\mathbb Q)-b_{i-2}(X,\mathbb Q)$  if $i\le {\rm dim} \ X$ 

(2)  $b_{i+1}(L_P, \mathbb Q) = b_i(X,\mathbb Q)-b_{i+2}(X,\mathbb Q)$ if $i\ge {\rm dim} X.$

(3) $b_i(L_R, \mathbb Q) = b_i(Y,\mathbb Q)-b_{i-2}(Y,\mathbb Q)$  if $i\le {\rm dim} \ X$ 

(4)  $b_{i+1}(L_R, \mathbb Q) = b_i(Y,\mathbb Q)-b_{i+2}(Y,\mathbb Q)$ if $i\ge {\rm dim} X.$

Since $P,R$  have homotopic links $L_P\sim L_R$, we see by (1) - (4)  and Theorem \ref{hyp}  that
$b_n(X,\mathbb Q)=b_n(Y,\mathbb Q)$, i.e., $$\frac{(p-1)^{n+1}+(-1)^{n}}{p}=\frac{(r-1)^{n+1}+(-1)^{n}}{r}.$$ 

Since the function $f(x)=\frac{(x-1)^{n+1}+(-1)^{n}}{x}$ increases for $x\ge 2$, we have $p=r$.
%

\end{proof}

\begin{remark}
{\rm From our proof, it follows that $\deg P=\deg R$ if  $P^*$ is homotopy equivalent to $R^*.$ We will use this version of Proposition \ref{prop:1}   in the next section}.
\end{remark}

We see that homeomorphic cones of dimension $n$ in $\C^{n+1}$ with smooth bases have homeomorphic bases. If the bases are non-smooth, the problem of whether they are homeomorphic is a difficult open problem that implies the metric version of Zariski multiplicity conjecture (see \cite{FernandesJS:2023}). However,  in the general case, we can prove  the following result, which slightly supports a positive answer to this problem:

\begin{proposition}\label{prop:euler}
Let $P=C(X), Q=C(Y)\subset \C^{n+1}$ be $n-$dimensional algebraic cones, which are ambient homeomorphic. Then $\chi(X)=\chi(Y).$
\end{proposition}

\begin{proof}
Let $f=0,\ g=0$ be the reduced equations of the cones $P$ and $Q$, respectively, and let $d_f=m(V(f),0), \ d_g=m(V(g),0).$ The global  fibrations given by $f,\ g$ are equivalent to local ones. Hence, to compute the Lefschetz number of a monodromy $h$ given by the fibration $f: \C^{n+1}\setminus P\to \C^*$ we can  assume that the monodromy $h$ is  geometric, i.e. $h_f(z)=e^{2\pi i/d_f} z$. We start with the following lemma which  follows from \cite[Theorem 3.3 and Remark 3.4]{le}:

\begin{lemma}\label{lemat}
Let $f,g : \mathbb C^{n+1}\to  \mathbb C$ be two reduced holomorphic  functions. If the germs $(V(f), 0)$ and $(V(g),0)$ are ambient  homeomorphic, then the induced maps in homology of the monodromies of $f$ and $g$ at $0$ are conjugate. In particular, the monodromies of $f$ and $g$ at $0$ have the same Lefschetz number and $\chi(F_f ) = \chi(F_g ).$
\end{lemma}

Hence, for  $0 < k < d_f$, we have  $\Lambda (h_f^k) = 0$, where $\Lambda (h_f^k )$ denotes the Lefschetz number of $h_f^k.$ Similarly, for  $0 < k < d_g$, we have  $\Lambda (h_g^k) = 0.$ 
It follows from Lemma \ref{lemat} that $\Lambda(h_f^k) = \Lambda (h_g^k)$ for all natural $k.$ We have $\Lambda(h_f^{d_f})=\chi(F_f)$ and  $\Lambda(h_g^{d_g})=\chi(F_g)$.
Hence, if $\chi(F_f)=\chi(F_g)\not=0$,  we have $d_f=d_g$. Now consider a natural mapping $F_f\ni x\mapsto [x]\in \mathbb P^n\setminus X$, which is a covering of degree $d_f.$ Hence, $\chi(F_f)=d_f\chi ( \mathbb P^n\setminus X)$. Similarly, 
$\chi(F_g)=d_g\chi ( \mathbb P^n\setminus Y)$.
This means by Lemma \ref{lemat} that $\chi ( \mathbb P^n\setminus X)=\chi ( \mathbb P^n\setminus Y)$. But $\chi ( \mathbb P^n\setminus X)=n+1-\chi(X)$, $\chi ( \mathbb P^n\setminus Y)=n+1-\chi(Y)$, hence $\chi(X)=\chi(Y).$
If $\chi(F_f)=0$, then  by the same argument we have $\chi ( \mathbb P^n\setminus X)=\chi ( \mathbb P^n\setminus Y)=0$ and $\chi(X)=\chi(Y)=n+1.$
\end{proof}

\begin{remark}
{\rm If $S\subset \mathbb{C}^m$ is an irreducible homogeneous complex algebraic surface, it was proved in \cite{BobadillaFS:2018} that the torsion of $H^2(S\setminus \{0\})$ is equal to $\mathbb{Z}/{\rm deg} S \mathbb{Z}$. However, this does not extend to higher dimensions. Indeed, let $Q=\mathbb {CP}^{1}\times\mathbb {CP}^{1}\subset \mathbb {CP}^{3}$ be the quadric and $L$ the link of $C(Q)$. Then $L$ is simply connected (see the proof of \cite[Theorem 3.1] {FernandesJS:2023}). By \cite[Proposition 2.4]{bfsv}, $L$ is diffeomorphic to $\mathbb{S}^2\times\mathbb{S}^3$.  Hence, by the K\"unneth formula, $L$ has free homology (and cohomology). Therefore, the cohomology of $C(Q)\setminus\{0\}$ has no torsion.}
\end{remark}

\section{Answer to the Metric Arnold corank problem}

In this section, we prove our main result:
\begin{theorem}\label{thm:Lip_rank}
Let $f,g\colon (\C^n,0)\to (\C,0)$ be two reduced holomorphic functions that are singular at the origin. Assume that there is a bi-Lipschitz homeomorphism $\phi\colon (\C^n,0)\to (\C^n,0)$ that sends the zeros of $f$ onto the zeros of $g$. Then ${\rm crk}(f)={\rm crk}(g)$.
\end{theorem}
\begin{proof}
We divide our proof into two lemmas.
The first lemma states, in particular, that the theorem holds true if $m(V(f),0)=m(V(g),0)=2$.

\begin{lemma}\label{lemma:same_corank}
Let $f,g\colon (\C^n,0)\to (\C,0)$ be two reduced holomorphic functions. Assume that there is a bi-Lipschitz homeomorphism $\phi\colon (V(f),0)\to (V(g),0)$. If $m(V(f),0)=m(V(g),0)=2$, then ${\rm crk}(f)={\rm crk}(g)$.
\end{lemma}
\begin{proof}
Let $r=n-{\rm crk}(f)$ and $s=n-{\rm crk}(g)$. Then ${\rm crk}(f)={\rm crk}(g)$ if and only if $r=s$. Since $m(V(f),0)=m(V(g),0)=2$, we have that $r$ and $s$ are positive.
After a change of coordinates, if necessary, we may assume that $f$ and $g$ are written in the following way:
$$
f(z_1,...,z_r,z_{r+1},...,z_n)=z_1^{2}+...+z_r^{2}+h(z_{1},...,z_n)
$$
and 
$$
g(z_1,...,z_s,z_{s+1},...,z_n)=z_1^{2}+...+z_s^{2}+\tilde h(z_{1},...,z_n)
$$
where $h,\tilde h\colon (\C^{n},0)\to (\C,0)$ are holomorphic functions such that ${\rm ord}_0 h>2$ and ${\rm ord}_0 \tilde h>2$. 

 The case $r\le 2$ follows from \cite[Proposition 1.6]{FernandesS:2016} (see also \cite[Theorem 5.3]{FernandesS:2023}). So, by symmetry, we may assume that $r,s\geq 3$.

  Let $S$ be a tangent cone to $V(f)$ at $0$, and $T$ be a tangent cone to $V(g)$ at $0$. Hence, $S=\{ z\in \C^n: z_1^2+...+z_r^2=0\}$ and $T=\{ z\in \C^n: z_1^2+...+z_s^2=0\}$. In particular, 
 $\mathrm{Sing}(S)=\{0\}\times\C^{n-r} $ and 
 $\mathrm{Sing}(T)=\{0\}\times\C^{n-s}.$

 By \cite[Theorem 3.2]{Sampaio:2016}, there is a bi-Lipschitz homeomorphism $h\colon S\to T$.
 By \cite[Theorem 4.2]{Sampaio:2016}, $h(\mathrm{Sing}(S))=\mathrm{Sing}(T)$. Then, $\{0\}\times\C^{n-r} $ and  $\{0\}\times\C^{n-s}$ are bi-Lipschitz homeomorphic, and therefore $r=s$.
\end{proof}

So, we have to prove that if $m(V(f),0)=2$, then $m(V(g),0)=2$ as well. 

Assume that $m(V(f),0)=2$. Hence, in some coordinates $S=\{ z\in \C^n: z_1^2+...+z_r^2=0\},\  r\le n.$ Thus $S=S' \times \C^{n-r}$, where $S'=\{ z\in \C^r: z_1^2+...+z_r^2=0\}.$ In particular, 
 $\mathrm{Sing}(S)=\{0\}\times\C^{n-r}.$

 Let $S$ be a tangent cone to $V(f)$ at $0$ and $T$ be a tangent cone to $V(g)$ at $0$. By \cite{Sampaio:2016}, there is a bi-Lipschitz homeomorphism $\phi\colon \C^n\to\C^n$ such that $\phi(S)=T$.  
 
 The case $r\le 2$ follows from \cite[Proposition 1.6]{FernandesS:2016} (see also \cite[Theorem 5.3]{FernandesS:2023}). Hence, we can assume $r\ge 3$. 

 Therefore, it is enough to prove the following
 
 \begin{lemma}
 Let $S$ be an irreducible algebraic cone of degree two. Let $T$ be another algebraic cone. Assume that there is an ambient bi-Lipschitz mapping $\phi: (\mathbb{C}^n, {S})\to (\mathbb{C}^n, T).$ Then  $\deg T=2.$  Moreover, there exists a linear isomorphism
 $L: S\to T.$
 \end{lemma}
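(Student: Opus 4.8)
The plan is to show that $T$ is forced to be a quadric cone of the same rank as $S$, and then to produce $L$ by linear algebra. First I would normalize $S$: since it is an irreducible cone of degree two it has the form $S=\{z_1^2+\cdots+z_r^2=0\}$ with $3\le r\le n$ (this is the only irreducible normal form), so $\deg S=2$ and $\dim\mathrm{Sing}\,S=n-r$. As $\phi$ is bi-Lipschitz it preserves dimension, so $T$ is again a hypersurface cone; write $d=\deg T$. Because Lipschitz regular points of a complex analytic set are smooth, a bi-Lipschitz homeomorphism carries smooth locus onto smooth locus, hence the singular locus onto the singular locus; thus $T$ is singular, so $d\ge 2$, and moreover $\dim\mathrm{Sing}\,T=\dim\mathrm{Sing}\,S=n-r$. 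The whole difficulty is therefore the upper bound $d\le 2$.

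The key preliminary step is to identify the Milnor fibre of $S$. Writing $S=S'\times\C^{n-r}$ with $S'=\{z_1^2+\cdots+z_r^2=0\}\subset\C^r$, the Milnor fibre $F_S=\{z_1^2+\cdots+z_r^2=1\}$ (times a contractible factor) deformation retracts onto the real sphere $\mathbb{S}^{r-1}$, so $\widetilde H_\ast(F_S)$ is $\Z$ in degree $r-1$ and zero otherwise, with the monodromy acting there by $(-1)^r$. An ambient bi-Lipschitz map is in particular an ambient homeomorphism, so Lemma \ref{lemat} applies: the homological monodromies of $S$ and $T$ are conjugate, whence $b_i(F_T)=b_i(F_S)$ for every $i$ and the monodromy of $T$ acts by $(-1)^r$ on $\widetilde H_{r-1}(F_T)=\Z$. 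In short, $F_T$ has the reduced homology of $\mathbb{S}^{r-1}$.

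From here I would argue by parity and by the dimension of the singular locus. When $r$ is odd, $\chi(F_T)=\chi(\mathbb{S}^{r-1})=2$; since $F_T\to\mathbb{P}^{n-1}\setminus B_T$ is a $d$-sheeted covering, $d\mid\chi(F_T)=2$, forcing $d=2$. When $T$ has an isolated singularity (the case $r=n$), the single nonzero Betti number of its Milnor fibre equals the Milnor number $(d-1)^n$, so $(d-1)^n=b_{n-1}(F_T)=1$ gives $d=2$ at once. The remaining case $r$ even with $r<n$ is the main obstacle: here $\chi(F_T)=0$ and the monodromy is trivial, so neither the Euler-characteristic divisibility nor the Milnor-number count is directly available. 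My plan is to reduce it to the isolated case by exploiting that $\mathrm{Sing}\,S=\{0\}\times\C^{n-r}$ is a linear subspace along which $S=S'\times\C^{n-r}$ is a trivial product: the ambient bi-Lipschitz equivalence should force a corresponding splitting $T\simeq T'\times\C^{n-r}$ with $T'\subset\C^r$ having an isolated singularity, after which $F_{T'}\simeq F_T$ reduces the count to $(\deg T'-1)^r=1$, giving $\deg T=\deg T'=2$. Making this splitting precise (or, alternatively, controlling the transverse Milnor number of $T$ along its singular locus) is where the real work lies.

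Once $d=2$ is established, the \emph{moreover} is pure linear algebra: $T=\{Q=0\}$ for a quadratic form $Q$, and $\dim\mathrm{Sing}\,T=n-r$ forces $\mathrm{rank}\,Q=r$. Over $\C$ any two quadratic forms of the same rank are linearly equivalent, so a linear change of coordinates carries $z_1^2+\cdots+z_r^2$ to $Q$ and restricts to the desired linear isomorphism $L\colon S\to T$.
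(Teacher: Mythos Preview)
Your treatment of the cases $r$ odd and $r=n$ is correct and in fact cleaner than the paper's uniform argument: for $r$ odd the divisibility $d\mid\chi(F_T)=\chi(\mathbb S^{r-1})=2$ coming from the $d$-fold cover $F_T\to\mathbb P^{n-1}\setminus B_T$ gives $d=2$ immediately, and for $r=n$ the identity $(d-1)^n=b_{n-1}(F_T)=1$ does the same. The ``moreover'' clause is also fine and matches the paper. The genuine gap is the case $r$ even with $r<n$, which is not a loose end but the core difficulty. Your proposed reduction---that the ambient bi-Lipschitz equivalence forces a splitting $T\cong T'\times\C^{n-r}$---is unjustified: a bi-Lipschitz map respects no linear structure, $\mathrm{Sing}(T)$ need not be a linear subspace, and even when it is, the transverse type of $T$ along it need not be constant. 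So the passage to an isolated-singularity $T'$ with $F_{T'}\simeq F_T$ is simply not available in general.

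The paper's proof shows how far one is from such a splitting. It argues by taking $d>2$ minimal among degrees of cones bi-Lipschitz equivalent to $S$, transports the translations of $S$ through $\phi$, and passes to tangent cones at points $z\in\mathrm{Sing}(T)$; by minimality each $T_zT$ has degree $d$ or $2$. If $\mathrm{mult}_zT=d$ generically along $\mathrm{Sing}(T)$, a B\'ezout argument \emph{does} yield a product $T=T'\times\C^{n-r}$, and then the earlier smooth-base result (Theorem~\ref{1}) finishes. But if $\mathrm{mult}_zT=2$ generically, no splitting is produced; instead the paper analyses the Taylor expansion of the reduced equation along $\mathrm{Sing}(T)$, forces it into the special shape $f(x,t)=\sum_{k\ge2}x_1^{\,d-k}c_{i_1\cdots i_k}t_{i_1}\cdots t_{i_k}$, and then compares Euler characteristics of the projectivised bases via Parusi\'nski's formula (Theorem~\ref{par}) to obtain $d=2$. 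Your fallback phrase ``controlling the transverse Milnor number along the singular locus'' points in the right direction, but making it precise requires exactly this minimality-plus-Taylor-plus-Euler machinery, none of which your proposal supplies.
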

 
 \begin{proof}
We can assume that $S=\{ z\in \C^n: z_1^2+...+z_r^2=0\}, 3\le r\le n.$  Assume that the lemma is false. Let $d>2$ be the minimal degree of an algebraic cone $T$ such that there is a bi-Lipschitz mapping $\phi : S\to T.$ 
Since smoothness is 
 a bi-Lipschitz invariant and $T$ is bi-Lipschitz equivalent to $S$ we deduce that also $\mathrm{Sing}(T)$  in some system of coordinates is equal to $\{0\}\times \C^{n-r}$ (see  \cite{Sampaio:2016}  and \cite{prill}). Additionally, we can choose this system of coordinates so generic, that
 $T'\times\{0\}=T\cap (\C^r \times \{0\})$ is a cone with isolated singularity. 
 
 Now consider two points $z,w\in \mathrm{Sing}(S).$ Then $z, w \in  \{0\}  \times \C^{n-r}.$ There is a translation $L: \C^n\ni x\mapsto x+w-z\in \C^n$, which is of course bi-Lipschitz and which preserves $S$ and $\mathrm{Sing}(S).$ Hence $(\C^n, S,z)$ is bi-Lipschitz equivalent to
 $(\C^n,S, w).$  Since $(\C^n, S)$ is bi-Lipschitz equivalent to $(\C^n,T)$, the same is true for $T.$ 
 Indeed, we have  bi-Lipschitz mappings $\psi_v=\phi L_v\phi^{-1} : \C^n\to\C^n,$  where $v\in \C^n$ and $L_v:\C^n\ni x\mapsto x+v\in \C^n$, preserve $T$ and 
$ \mathrm{Sing}(T).$  
 
 In particular, if $z\in \mathrm{Sing}(T)$, then we have the bi-Lipschitz mapping $d \psi_{\phi^{-1}(z)} \circ\phi: (\C^n, T_0 S=S)\to (\C^n, T_z T)$  (here $T_zS$ and $T_zT$ denote the tangent cones at the point $z$ to $S$ and $T$, respectively, and $d \psi_{\phi^{-1}(z)}$ is the Sampaio derivation, see \cite{Sampaio:2016}). Let $T_z=T_zT$. By our assumption, we have either deg $T_z=\deg\ T$ or deg $T_z=2.$ If deg $T_z=\deg\ T$ on a dense subset of $\mathrm{Sing}(T)$, then  for every   $x\in \mathrm{Sing}(T)$, we have $m(T,x)=deg\ T,$  because $m(T,\cdot)$ is upper semicontinuous.
 
  Let $\overline{T}$ be a projective closure of $T.$ Since $m(\overline{T},\cdot)$ is upper semicontinuous, we have $m(\overline{T},z)=\deg T$ also for $z\in \overline{\mathrm{Sing}(T)}.$ Let $W$ be the part at infinity of  $\overline{\mathrm{Sing}(T)}.$ Take  $w\in W$ and  $a\in T'$ and consider the line $l=\overline{(w,a)}.$ By the Bezout Theorem, we have $l\subset T.$ This means that the projection $\pi : T\to T'$ along $\{0\} \times \C^{n-r} $ (i.e. the projection with  center $W$) has $\{a\}\times \C^{n-r}, \ a\in T',$ as fibers, i.e., $T=T'\times \C^{n-r}.$ This means that $S'^*$ and $T'^*$ have the same homotopy type. Thus, as in Proposition \ref{prop:1}, we have $\deg \ S=\deg \ T$,  a contradiction.
  
Hence, we can assume that $m(T,\cdot)=2$ on a dense subset of $\mathrm{Sing}(T)$. Take $z\in \mathrm{Sing}(T)= \{0\}\times\C^{n-r}.$ Since $T_z T$ is quadratic and bi-Lipschitz equivalent to $T$ and $\{0\}\times\C^{n-r} \subset T_z=T_z T,$ we have $\mathrm{Sing}(T_z)=\{0\}\times\C^{n-r}.$ In particular, for $x\in  \{0\}\times\C^{n-r}$, we have $m(T_z,x)=2.$ Since $m(\overline{T_z},\cdot)$ is upper semicontinuous, we have  $m(\overline{T_z},y)=2$ also for $y\in \overline{Sing(T_z)}.$ Let $T_z'\times\{0\}=T_z\cap (\C^{r}\times \{0\}).$ Thus the projection $\pi : T_z\to T'_z$ along $ \{0\}\times\C^{n-r}$ has $ \{a\}\times\C^{n-r}, a\in T'_z,$ as fibers, i.e., $T_z=T'_z\times \C^{n-r}.$ Denote coordinates in $ \{0\}\times\C^{n-r}$ by $x$ and coordinates in $ \C^r\times\{0\}$ by $t.$ Hence, $T_z$ has equation
$$\sum^r_{i,j=1} \alpha_{ij}(x)t_i t_j=0,$$ where $z=(0,x).$ Let $A=\det[\alpha_{ij}(x)].$  Note that $A$ is a non-zero polynomial on a dense subset of $\{0\}\times\C^{n-r}$, because all tangent cones of degree two have the same rank $r={\rm rank} \ S.$ Moreover,  $m(T,z)=2$ on $\C^{n-r}\setminus \{A=0\}$ (here we identify $ \{0\}\times\C^{n-r}$ with $\C^{n-r}$) and   all $\alpha_{ij}$ vanish on $A=0$, because the tangent cone, if it is quadratic, has to have the same rank as $S.$ Let $A=\prod^s_{j=1} a_j^{k_j}$ be a decomposition of $A$ into irreducible factors. 

Fix $i,j$ and consider the polynomial $\alpha_{i,j}(x):=\alpha.$ Thus $$\alpha:=b\prod^s_{j=1} a_j^{r_j},$$ where $\mathrm{GCD}(b,A)=1.$ We first show that $b$ is a constant. Let $f$ be a homogeneous reduced polynomial that describes $T.$ We know that $\frac{\partial^2 f}{\partial t_i\partial t_j}(x,0)=2\alpha$. Let us assume that $a_1$ has  minimal degree. We can choose a generic system of coordinates so that $\frac{\partial a_1}{\partial x_1}(x)\not=0.$   Hence, on $\{a_1=0\}$, we have 
$$\frac{\partial^{r_1} f}{\partial t_i\partial t_j\partial^{r_1} x_1}(0,x)=
C b(x)\prod^s_{i=2} a_i^{r_i+2} (\frac{\partial a_1}{\partial x_1}(x))^{r_1},
$$ 
where $C$ is a non-zero constant. Take a point $x\in \{a_1=0\}$ in such a way that $b(x)\not=0$,  $\frac{\partial a_1}{\partial x_1}(x)\not=0$ and $a_i(x)\not=0$ for $i>1.$ Then deg $T_{(0,x)}T$ has degree at most $d-(\deg \ b+\sum^s_{i=2}r_i \deg \ a _i)< d.$ This is a contradiction. Hence, $b$ is a constant. In a similar way, we can prove that  $s=1.$ Finally, we show that $a_1$ is a linear polynomial. In fact,  on $\{a_1=0\}$ we have:

$$\frac{\partial^{r_1+2} f}{\partial t_i\partial t_j\partial^{r_1} x_1}(0,x)=
C  ( \frac{\partial a_1}{\partial x_1}(x))^{r_1},$$
where $C$ is a non-zero constant. Take a point $x\in \{ a_1=0\}$ such that $\frac{\partial a_1}{\partial x_1}(x,0)\not=0$. Then $\deg T_{(x,0)}T$ has degree at most $2+r_1.$ If $a_1$ is not a linear polynomial, then $2+r_1<d$. This is a contradiction.  
We can assume that $a_1=x_1.$ Thus 
$$
\frac{\partial^2 f}{\partial t_i\partial t_j}(0,x)= x_1^{d-2}  c_{ij} ,
$$ 
where $c_{ij}$ are constants. 

Now consider $\frac{\partial^3 f}{\partial t_i\partial t_j\partial t_k}(x,0)=\beta.$  If $d>3$ we have again $\beta=0$ on $x_1=0$, because otherwise on $\{x_1=0\}\subset \mathrm{Sing}(T)$ we can find a tangent cone to $T$ of degree less than $d.$ As above, we see that
$\beta=Cx_1^{d-3}$ where $C$ is a constant. We can continue in this manner to obtain from the Taylor formula
$$f(x,t)=\sum^{d}_{k=2} \frac{1}{k!} \sum_{i_1,i_2,...,i_k} x_1^{d-k}  c_{i_1, i_2,..., i_k} t_{i_1} t_{i_2}... t_{i_k}.$$

On the other hand, we know that $T$ is bi-Lipschitz equivalent to $S$, in particular, by Proposition \ref{prop:euler} the bases of these two cones have the same Euler characteristic. This implies that also the bases of the cones 
$$\tilde{S}'=\{ (x,t)\in \C\times \C^r : t_1^2+...+t_r^2=0\}$$ and $$\tilde{T}'=\{ (x,t)\in \C\times \C^r : \sum^{d}_{k=2} \frac{1}{k!} \sum_{i_1,i_2,...,i_k} x^{d-k}  c_{i_1, i_2,..., i_k} t_{i_1} t_{i_2}... t_{i_k}=0\}$$ have the same Euler characteristic. 

Indeed,  let $B,C$ be  the bases of  $\tilde{S}',\tilde{T}'$ respectively. Consider a new cone 
$\C\times \tilde{S}'$ with base $B'.$ Then $B'$ in $\mathbb P^{r+1}$ is a cone over $B$ with  vertex $A=(1:0:...:0),$ i.e., it is a line bundle over $B$ plus one point. Hence, $\chi(B')=\chi(B)+1$. For the same reason, the base $C'$ of the cone $\C\times \tilde{T}'$ has Euler characteristic $\chi(C')=\chi(C)+1$. Thus, if the Euler characteristics of the new bases coincide, then the Euler characteristics of the old bases also coincide. The result is obtained by induction.

Observe that the bases  $B$ and $C$ have only one singular point $(1:0:0:....:0)$ each. Moreover, the Milnor number at this point is the same at $B$ and $C$, and it is equal to $1$, because both of these singularities are quadratic, i.e., they can be described as $\{z\in\C^r: z_1^2+...+z_r^2+{\it higher\ order\ terms\ in \ z} \}.$ 

Now we use the following theorem, which is an easy consequence of Parusi\'nski's result from  \cite{par}:

\begin{theorem}\label{par}
Let $X\subset \C\mathbb P^{n}$ be a degree $d$  complex projective hypersurface. Assume that $X$ has only isolated singular points $x_1,..., x_k$ and the Milnor number of  $X$ at $x_i$ is equal to $\mu_i.$ Then
the Euler characteristic of $X$ is given by the formula
$$ \chi (X) = (n+1) - \frac{1}{d} (1 + (-1)^{n}(d - 1)^{n+1})+ (-1)^n \sum^k_{i=1} \mu_i.$$
\end{theorem}

By Theorem \ref{par}, we have $r+1 - \frac{1}{d} (1 + (-1)^{r}(d - 1)^{r+1})\pm 1= r+1-\frac{1}{2} (1 + (-1)^{r}(2 - 1)^{r+1})\pm 1.$ Since the function  $f(x)=\frac{(x-1)^{r+1}+(-1)^{r}}{x}$ increases for $x\ge 2$, we get $d=2$, which is a contradiction.

Hence, deg $T=2.$ Moreover, the quadratic functions that describe $S$ and $T$ have the same rank equal to $n-\dim \mathrm{Sing}(S)=n-\dim \mathrm{Sing}(T).$ This means that they are linearly equivalent, hence there is a linear isomorphism $L:\C^n\to\C^n$ such that
$L(S)=T.$
\end{proof}
\end{proof}

As a direct consequence, we obtain that the multiplicity two is an ambient bi-Lipschitz invariant.
\begin{corollary}\label{cor:nowe}
Let  $(X,0),(Y,0)\subset (\C^n,0)$ be germs of analytic hypersurfaces that are ambient bi-Lipschitz equivalent. If $m(X,0)=2$, then also $m(Y,0)=2.$ 
\end{corollary}

\begin{remark}
  {\rm In Corollary \ref{cor:nowe}, it is important that $X,Y$ are hypersurfaces. Indeed, one can construct a quadratic three dimensional cone  $X\subset \C^7$ that is ambient bi-Lipschitz equivalent to a non-quadratic three dimensional cone  $Y\subset \C^7.$ This can be easily deduced from 
  \cite{bfsv} and \cite{bfj}.}
  \end{remark}
  

\section{Answer to the Arnold corank problem for surfaces}
In this Section, we prove that the Arnold corank problem has a positive answer for functions of three variables. 

Let us state some definitions and results that we need for our proof.

\begin{definition}
A holomorphic function $f\colon (\C^n,0)\to (\C,0)$
is said a {\bf generalized semi-Brieskorn-Pham} if there are a holomorphic diffeomorphism $\phi\colon (\C^n,0)\to (\C^n,0)$, a holomorphic function $f\colon (\C^n,0)\to (\C,0)$, and positive integer numbers $k, a_1,...,a_{k-1}$ and $a_k$ such that 
\begin{enumerate}
\item $a_1\leq a_2\leq ....\leq a_k$;
\item ${\rm ord}_0h>a_k$;
 \item $f\circ \phi(z_1,...,z_n)=z_1^{a_1}+...+z_k^{a_k}+h(z_1,...,z_n)$ for all $z=(z_1,...,z_n)$ close to $0$.
\end{enumerate}
In this case, if $a_1=1$, we define $m_1(f)=1$ and if $a_1>1$, we define $m_1(f):=\max\{i;a_i=a_1\}$.
\end{definition}

The following result was proved separately by A'Campo in \cite{Acampo:1973} and L\^e in \cite{Le:1973}.
\begin{theorem}[A'Campo-L\^e's Theorem]
Let $f,g\colon(\mathbb{C}^n,0)\to (\mathbb{C},0)$ be two complex analytic functions such that there is a homeomorphism $\varphi\colon(\mathbb{C}^n,V(f),0)\to (\mathbb{C}^n,V(g),0)$. If $m(V(f),0)=1$ then $m(V(g),0)=1$ as well.
\end{theorem}

The next result was proved by A'Campo in \cite{Acampo:1975} (see also \cite{Karras:1986}). \begin{theorem}\label{thm:acampo_cpmplement}
Let $f\colon(\mathbb{C}^n,0)\to (\mathbb{C},0)$ be a complex analytic function. Then
$$
m(V(f),0)\leq \inf \{k>0;\Lambda(h_f^k)\not=0\},
$$
where $h_f\colon F_f\to F_f$ is the monodromy of the Milnor fibration of $f$, and $\Lambda(h_f^k)$ denotes the Lefschetz number of $h_f^k$. Moreover, if $\chi(\C P^{n-1}\setminus PV(in(f)))\not=0$, then
$$
m(V(f),0)=\inf \{k>0;\Lambda(h_f^k)\not=0\},
$$
where $PV(in(f))=p(V(in(f))\setminus\{0\})$ and $p$ is the natural projection $p\colon \C^n\setminus\{0\}\to \C P^{n-1}$.
\end{theorem}

Now, we are ready to state and prove the main result of this Section.
\begin{theorem}\label{thm:top_rank}
Let $f,g\colon (\C^3,0)\to (\C,0)$ be two reduced holomorphic functions. Assume that there is a homeomorphism $\phi\colon (\C^3,0)\to (\C^3,0)$ such that $\phi(V(f))=V(g)$. Then ${\rm crk}(f)={\rm crk}(g)$.
\end{theorem}
\begin{proof}
Since ${\rm rk}(V(f),0)=3-{\rm crk}(f)$ and ${\rm rk}(V(g),0)=3-{\rm crk}(g)$, we obtain that ${\rm crk}(f)={\rm crk}(g)$ if and only if ${\rm rk}(V(f),0)={\rm rk}(V(g),0)$. So, we are going to show ${\rm rk}(V(f),0)={\rm rk}(V(g),0)$.

We first consider the case that ${\rm rk}(V(f),0)\not=0$ and ${\rm rk}(V(g),0)\not=0$.
\begin{claim}\label{claim:mult_two}
If ${\rm rk}(V(f),0)\not=0$ and ${\rm rk}(V(g),0)\not=0$, then ${\rm rk}(V(f),0)={\rm rk}(V(g),0)$.
\end{claim}
\begin{proof}[Proof of Claim \ref{claim:mult_two}]
Assume that ${\rm rk}(V(f),0)>0$ and ${\rm rk}(V(g),0)>0$.

Suppose that ${\rm rk}(V(f),0)=3$. Then, after a change of coordinates, if necessary, we may assume that $f(z_1,z_2,z_3)= z_1^2+z_2^2+z_3^2$. In particular, $\mu(f)=1$. Then, $\mu(g)=1$. Therefore, ${\rm rk}(V(g),0)=3$ as well.

Now, assume that ${\rm rk}(V(f),0)=2$. Then, by the Splitting Lemma \cite[Theorem 2.4]{GreuelP:2026}, after a change of coordinates, if necessary, we may assume that $f(z_1,z_2,z_3)= z_1^2+z_2^2+az_3^{k+1}$ for some integer  $k\geq 2$ and some constant $a\in \C$. Indeed, we may assume that $a\in \{0,1\}$.

If $a=1$, then the zeros of $f$, $V(f)$, is an $A_k$ singularity, and its link $L_f$ has finite fundamental group. Let $G=\pi(L_f)$. Then $(V(f),0)$ is isomorphic to $(\C^2/G,0)$ (see, for instance, \cite[Theorem 3.15(ii)]{Dimca:1992}).  Since $\pi_1(L_g)\cong G$, where $L_g$ is the link of the zeros of $g$, we have that $(V(g),0)$ is isomorphic to $(\C^2/G,0)$ (see, for instance, \cite[Corollary 3.16 and Theorem 3.15(ii)]{Dimca:1992}).
Therefore, $(V(g),0)$ is isomorphic to $(V(f),0)$. Then $m(V(g),0)=m(V(f),0)=2$ (see e.g. \cite{gau}, \cite{Sampaio:2019}, \cite{Sampaio:2020b} and \cite{Sampaio:2022}). In particular, $(V(g),0)$ is bi-Lipschitz homeomorphic to $(V(f),0)$. By Lemma \ref{lemma:same_corank}, ${\rm rk}(V(f),0)={\rm rk}(V(g),0)=2$.

If $a=0$, then $V(f)$ is the union of two planes $P_1$ and $P_2$. By A'Campo-L\^e's Theorem, $V(g)$ is the union of two smooth surfaces, which also implies ${\rm rk}(V(f),0)={\rm rk}(V(g),0)=2$.

By symmetry, if ${\rm rk}(V(f),0)=1$, then ${\rm rk}(V(g),0)=1$.

Therefore, in any case, ${\rm rk}(V(g),0)={\rm rk}(V(f),0)$, and thus ${\rm crk}(f)={\rm crk}(g)$. 
\end{proof}
To finish our proof, we need the following lemma, which we state in a much more general setting.
\begin{lemma}\label{lemma:semi_top_mult}
Let $f,g\colon (\C^n,0)\to (\C,0)$ be two holomorphic functions. Assume that $f\colon (\C^n,0)\to (\C,0)$ is a generalized semi-Brieskorn-Pham function and there is a homeomorphism $\phi\colon (\C^n,0)\to (\C^n,0)$ such that $\phi(V(f))=V(g)$. If $m(V(f),0)>2$ or $m_1(f)$ is an odd number, then $m(V(g),0)\leq m(V(f),0)$.
\end{lemma}
\begin{proof}
After a change of coordinates, if necessary, we may assume that $f$ is written in the following way:
$$
f(z_1,...,z_r,z_{r+1},...,z_n)=z_1^{a_1}+...+z_k^{a_k}+h(z_{1},...,z_n),
$$
where $h\colon (\C^{n},0)\to (\C,0)$ is a holomorphic function such that ${\rm ord}_0 h>a_k$. In this case, $in(f)(z)= z_1^{a}+...+z_r^{a}$, where $r=m_1(f)$. Let $q\colon \colon \C^r\to \C$ be the polynomial $q(z_1,...,z_r)=z_1^a+...+z_r^a$. Then $F_{in(f)}=F_q\times \C^{n-r}$. In particular, $\chi(F_{in(f)})=\chi(F_q)$. However, $\chi(F_q)=1+(-1)^{r-1}(a-1)^r=0$ if and only if $a=2$ and $r$ is an even number. Since we are assuming that $a>2$ or $r$ is an odd number, we have $\chi(F_{in(f)})\not=0$. Since the restriction to $F_{in(f)}$ of the natural projection $p\colon \C^n\setminus\{0\}\to \C P^{n-1}$ gives a finite covering map $p|\colon F_{in(f)}\to \C P^{n-1}\setminus PV(in(f))$, we obtain $\chi(\C P^{n-1}\setminus PV(in(f)))\not=0$. 

By Theorem \ref{thm:acampo_cpmplement}, we obtain that
$$
m(V(f),0)=\inf \{k>0;\Lambda(h_f^k)\not=0\}
$$
and
$$
m(V(g),0)\leq\inf \{k>0;\Lambda(h_g^k)\not=0\},
$$
where $h_f\colon F_f\to F_f$ (resp. $h_g\colon F_g\to F_g$) is the monodromy of the Milnor fibration of $f$ (resp. $g$), and $\Lambda(h)$ denotes the Lefschetz number of $h$.

Since $\Lambda(h_f^k)=\Lambda(h_g^k)$ for all $k$,
we have that $m(V(g),0)\leq m(V(f),0)$.
\end{proof}

Now, we are going to prove that ${\rm rk}(V(f),0)\not=0$ if and only if ${\rm rk}(V(g),0)\not=0$, which will complete the proof of the theorem. Indeed, this follows from a result stated in \cite{nav}, but as far as we know, a proof of that result was never published. For convenience, we present a proof here.

Assume, for instance, that ${\rm rk}(V(f),0)\not=0$. Then $m(V(f),0)=2$, and thus $f$ is a semi-Brieskorn-Pham function. In this case, ${\rm rk}(V(f),0)=m_1(f)$. If ${\rm rk}(V(f),0)$ is an odd number, then by Lemma \ref{lemma:semi_top_mult}, $m(V(g),0)\leq m(V(f),0)=2$. By A'Campo-L\^e's Theorem, $m(V(g),0)\geq 2$. Therefore, $m(V(g),0)=m(V(f),0)=2$.

Thus, we may assume that ${\rm rk}(V(f),0)=2$. By the Splitting Lemma again, we may assume that $f(x,y,z)=x^2+y^2+az^{k+1}$, where $k\geq 2$ and $a\in \{0,1\}$.

If $a=1$, then by proceeding as in the proof of Claim \ref{claim:mult_two}, we obtain that $(V(g),0)$ is isomorphic to $(V(f),0)$. In particular, we obtain $m(V(g),0)=m(V(f),0)=2$, and thus ${\rm rk}(V(g),0)\not=0$. 

If $a=0$, then $V(f)$ is the union of two planes $P_1$ and $P_2$. By A'Campo-L\^e's Theorem, $V(g)$ is the union of two smooth surfaces. Then $m(V(g),0)=2$, and thus ${\rm rk}(V(g),0)\not=0$. 
\end{proof}

\vspace{15mm}

\end{document}